\newcommand{\mb}[1]{\ensuremath{\mathbb{#1}}}
\newcommand{\N}{\mb{N}}
\newcommand{\R}{\mb{R}}
\newcommand{\Z}{\mb{Z}}
\newcommand{\T}{\mb{T}}
\renewcommand{\d}{\ensuremath{\partial}}
\newcommand{\diff}[1]{\frac{d}{d#1}}
\newfont{\bl}{msbm10 scaled \magstep2}
\newtheorem{theorem}{Theorem}[section]
\newtheorem{lemma}[theorem]{Lemma}
\newtheorem{proposition}[theorem]{Proposition}
\newtheorem{corollary}[theorem]{Corollary}
\theoremstyle{definition}
\newtheorem{remark}[theorem]{Remark}
\newcommand{\beq}{\begin{equation}}
\newcommand{\eeq}{\end{equation}}
\newcommand{\col}{\colon}
\newcommand{\FT}[1]{\widehat{#1}}
\newcommand{\inp}[2]{\langle #1 | #2 \rangle}  
\newcommand{\notmid}{\mid\kern-0.5em\not\kern0.5em}
\newcommand{\norm}[2]{{\| #1 \|}_{#2}}
\newcommand{\ga}{\gamma}
\newcommand{\eps}{\varepsilon}
\begin{document}

\pagestyle{plain}

\title{Wave breaking of periodic solutions to the Fornberg-Whitham equation}

\author{G\"unther H\"ormann}

\address{Fakult\"at f\"ur Mathematik\\
Universit\"at Wien, Austria}

\email{guenther.hoermann@univie.ac.at}


\subjclass[2010]{35Q53, 35B44}

\keywords{periodic Fornberg-Whitham equation, wave breaking, blow-up, Sobolev spaces}

\date{\today}

\begin{abstract}
Based on recent well-posedness results in Sobolev (or Besov spaces) for periodic solutions to the Fornberg-Whitham equations we investigate here the questions of wave breaking and blow-up for these solutions. We show first that finite maximal life time of a solution necessarily leads to wave breaking. Second, we prove that for a certain class of initial wave profiles the corresponding solutions do indeed blow-up in finite time. 
\end{abstract}

\maketitle


\section{Introduction}

We investigate the qualitative properties regarding blow-up and wave breaking of (spatially) periodic solutions to the so-called Fornberg-Whitham equation, which was introduced as a shallow water wave model that is comparably simple and yet showed indications of wave breaking (cf.\ \cites{Seliger68,Whitham1974,FB78,NaumShish94}). For non-periodic solutions to the Fornberg-Whitham equation on the real line, rigorous blow-up results and wave breaking have been proved in  \cite{ConstantinEscher1998}. Here, we show that similar results hold also in the periodic case.

Let $\T = \R / \Z$ be the one dimensional torus group. Functions on  $\T$ may be identified with $1$-periodic functions on $\R$. We will consider the wave height described basically by a function of space and time $u \col \T \times \R \to \R$, $(x,t) \mapsto u(x,t)$, though in the relevant cases of a finite time of existence of a wave solution $u$, the time domain will be confined to a bounded closed interval $[0,T_0]$, $T_0 > 0$, or to a half-open interval $[0,T[$, $T > 0$. We will often write $u(t)$ to denote the function $x \mapsto u(x,t)$.

The Cauchy problem for the Fornberg-Whitham equation reads
\begin{align} \label{FW3rdOrder}
     u_{txx} - u_t + \frac{9}{2} u_x u_{xx} + \frac{3}{2} u u_{xxx} - 
     \frac{3}{2} u u_x + u_x  &= 0,\\
     u(x,0) &= u_0(x).
\end{align}
If we suppose that $u$ is at least continuous with respect to the time variable and of some Sobolev regularity with respect to the spatial variable, then we may employ the linear continuous operator $Q := (\text{id} - \d_x^2)^{-1} \col H^r(\T) \to H^{r+2}(\T)$, for any $r \in \R$ (acting on the spatial part $u(.,t)$ at any fixed time $t$), to rewrite the above Cauchy problem in the following non-local form
\begin{align}
   \label{FWEqu}  u_t + \frac{3}{2} u u_x &= Q u_x ,\\
     \label{IC} u(x,0) &= u_0(x),
\end{align}
which also requires less spatial regularity of a prospective solution $u$.  

\begin{remark} Concerning normalization factors and signs we followed here in Equations \eqref{FWEqu} and \eqref{FW3rdOrder} the detailed form as used in the recent publications \cites{Holmes16,HolTho17}, which agrees with Equation (29) in \cite{FB78} (in case $\nu = 1$). Note that there was a sign error in the latter equation with the linear term involving $u_x$ (compare with Equation (4) in  \cite{FB78} or with earlier papers cited there). However, replacing $u(x,t)$ by $-u(x,-t)$ transforms solutions of either sign variant of the equation into solutions for the other. Moreover, if $u$ solves \eqref{FWEqu}, then $v := 3 u/2$ is a solution to $v_t + v v_x = Q v_x$. 
\end{remark}

Well-posedness results for (\ref{FWEqu}-\ref{IC}) with spatial regularity according to Sobolev or Besov scales have been obtained in \cites{Holmes16,HolTho17}. Here we will make use of the following simpler consequence:\\ 
\emph{If $s > 3/2$ and $u_0 \in H^s(\T)$, then there exists $T_0 > 0$ such that (\ref{FWEqu}-\ref{IC}) possesses a 
\beq\label{uniquesol}
  \text{unique solution } u \in C([0,T_0], H^s(\T)) \cap C^1([0,T_0],H^{s-1}(\T)). 
\eeq
The map $u_0 \mapsto u$ is  continuous $H^s(\T) \to C([0,T_0], H^s(\T))$ and 
\beq\label{boundedsobolev}
   \sup_{t \in [0,T_0]} \norm{u(t)}{H^s(\T)} < \infty.
\eeq
}
The life span $T_0$ can be guaranteed to be above some a priori lower bound depending only on $s$ and the $H^s$-norm of $u_0$. For given and fixed Sobolev index $s > 3/2$ and $u_0 \in H^s(\T)$ we will consider the \emph{maximal life span} $T$ for a unique solution $u$, defined as the supremum of all possible $T_0$ in the above well-posedness result. Thus, a unique solution that is global in time corresponds to the case $T = \infty$.

The observations made in \cite{HolTho17}*{Theorem 1.5 and its proof} include the following result, which can be considered preparatory of a blow-up situation: \emph{Let $s > 3/2$ and $T$ be the maximal life span $(0 < T \leq \infty)$ for the solution $u \in C([0,T[, H^s(\T))$ to (\ref{FWEqu}-\ref{IC}) corresponding to the initial data $u_0 \in H^s(\T)$. If $T < \infty$, then
\beq\label{prepblowup}
   \limsup_{t \uparrow T} \norm{u(t)}{H^s(\T)} = \infty  \quad \text{and} \quad
   \int_0^T \norm{u_x(t)}{L^\infty(\T)} \, dt = \infty.
\eeq
}

 Section 2 will be concerned with the detailed proof that a finite maximal life span $T < \infty$ for a solution $u$ necessarily implies \emph{wave breaking} for this solution at time $T$. Recall (cf.\ \cite[Definition 6.1]{Constantin2011}) that wave breaking is said to occur for $u$ at time $T > 0$, if the wave itself remains bounded while its slope becomes unbounded, i.e., 
\beq\label{wavebreaking}
  \sup_{t \in [0,T[} \norm{u(t)}{L^\infty(\T)} < \infty \quad \text{and} \quad 
  \limsup_{t \uparrow T} \norm{u_x(t)}{L^\infty(\T)} = \infty.
\eeq
In Section 3 we show that for a certain class of initial configurations $u_0$, the maximal time of existence is indeed finite, hence wave breaking does occur for these initial values.

In the sequel, we will occasionally simplify notation by dropping $\T$ in referring to the spaces  $H^r(\T)$ or $L^p(\T)$.
 

\section{Wave breaking in case of finite maximal life span} 

In this section, we will show that the assumption of a finite maximal life span always implies wave breaking. Until stated otherwise, we will use the following convention throughout:
\begin{center}
\emph{Let $s > 3/2$, $u_0 \in H^s$, $u$ be the corresponding unique solution to (\ref{FWEqu}-\ref{IC}), and denote by $T$ its maximal life span.}
\end{center}

We begin by drawing a simple immediate consequence from \eqref{prepblowup}.
\begin{proposition} If $T < \infty$, then
\beq\label{uxunbounded}
   \limsup_{t \uparrow T} \norm{u_x(t)}{L^\infty} = \infty.
\eeq  
\end{proposition}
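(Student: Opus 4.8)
The plan is to argue by contradiction, exploiting the second assertion in \eqref{prepblowup} together with the finiteness of $T$. Suppose, contrary to \eqref{uxunbounded}, that $\limsup_{t \uparrow T} \norm{u_x(t)}{L^\infty} = L < \infty$. I would first record that the real-valued function $g(t) := \norm{u_x(t)}{L^\infty}$ is continuous on $[0,T[$: since $s > 3/2$ we have $s - 1 > 1/2$, so the Sobolev embedding $H^{s-1}(\T) \hookrightarrow L^\infty(\T)$ is available, and as $u \in C([0,T[, H^s)$ the map $t \mapsto u_x(t)$ is continuous from $[0,T[$ into $H^{s-1}$, hence into $L^\infty$; continuity of the norm then yields continuity of $g$.

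With the $\limsup$ assumed finite, there is some $t_0 \in [0,T[$ such that $g(t) \leq L+1$ for all $t \in [t_0, T[$. On the complementary compact interval $[0,t_0]$ the continuous function $g$ attains a maximum, say $C_0$. Hence $g$ is bounded on all of $[0,T[$ by $M := \max(C_0, L+1)$.

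Finally, because $T < \infty$, this uniform bound gives $\int_0^T \norm{u_x(t)}{L^\infty}\, dt = \int_0^T g(t)\, dt \leq M\, T < \infty$, in direct contradiction to the second relation in \eqref{prepblowup}. Therefore the assumption is untenable and \eqref{uxunbounded} must hold.

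I do not anticipate a genuine obstacle here; the statement is rightly labelled an immediate consequence. The only points requiring a word of care are the justification that $g$ remains bounded on compact subintervals of $[0,T[$ — which is precisely where the continuity of $t \mapsto u(t)$ in $H^s$ and the embedding $H^{s-1} \hookrightarrow L^\infty$ (valid exactly because $s > 3/2$) enter — and the observation that the finiteness of the time interval is indispensable, since it is what converts the failure of integrability of $g$ into unboundedness of $g$ itself as $t \uparrow T$.
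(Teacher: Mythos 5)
Your proof is correct and uses exactly the same ingredients as the paper's: the divergence of $\int_0^T \norm{u_x(t)}{L^\infty}\,dt$ from \eqref{prepblowup}, the embedding $H^{s-1}(\T) \subset L^\infty(\T)$ for $s>3/2$, and the resulting continuity of $t \mapsto \norm{u_x(t)}{L^\infty}$ on $[0,T[$. The only difference is presentational -- you argue by contradiction (boundedness of the norm would force integrability), whereas the paper argues directly that the tail integrals $\int_{T_0}^T$ all diverge and hence the norm is unbounded on every $[T_0,T[$ -- so this is essentially the same argument in contrapositive form.
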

\begin{proof} 
 For every $T_0 \in \R$ with $0 < T_0 < T$, we have $u_x \in C([0,T_0],H^{s-1}) \subset C([0,T_0],L^\infty)$, since $s-1 > 1/2$ implies $H^{s-1}(\T) \subset L^\infty(\T)$.  We conclude that $\int_0^{T_0} \norm{u_x(t)}{L^\infty} \, dt < \infty$ and the second part of \eqref{prepblowup} then yields
$$
    \forall T_0 \in \R, 0 < T_0 < T: \quad \int_{T_0}^T \norm{u_x(t)}{L^\infty} \, dt = \infty.
$$
Therefore, the continuous function $t \mapsto \norm{u_x(t)}{L^\infty}$, $[0,T[\, \to [0,\infty[$ is unbounded on every interval $[T_0,T[$ with $0 < T_0 < T$. In other words, for every $n \in \N$, $n > 1/T$, we can find $t_n \in [T - \frac{1}{n}, T[$ such that $\norm{u_x(t_n)}{L^\infty} > n$, which proves \eqref{uxunbounded}.
\end{proof}

To obtain a wave breaking result, we need to show that $\norm{u(t)}{\infty}$ remains bounded as $t$ approaches $T$ from the left. As a preparation we first prove boundedness for the $L^2$ norms on finite time intervals.

\begin{lemma} For every $t \in [0,T[$ we have
\beq\label{L2bound}
   \norm{u(t)}{L^2} \leq e^{t} \norm{u_0}{L^2}.
\eeq
\end{lemma}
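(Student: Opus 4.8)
The plan is to establish \eqref{L2bound} by a standard $L^2$ energy estimate on the nonlocal equation \eqref{FWEqu}, closed off with Gr\"onwall's inequality. Since $s > 3/2$, the solution satisfies $u \in C([0,T[,H^s) \cap C^1([0,T[,H^{s-1})$, and because $s-1 \geq 0$ we have $H^{s-1} \emb L^2$; thus $t \mapsto u(t)$ is also of class $C^1$ into $L^2$ with derivative $u_t$, and $t \mapsto \ltwo{u(t)}^2 = \dis{u(t)}{u(t)}$ is differentiable with $\frac{1}{2}\diff{t}\ltwo{u(t)}^2 = \dis{u(t)}{u_t(t)}$. Inserting \eqref{FWEqu} for $u_t$ then gives
\beq\label{energyidentity}
   \frac{1}{2}\diff{t}\ltwo{u(t)}^2 = -\frac{3}{2}\int_\T u^2 u_x \, dx + \int_\T u \, Q u_x \, dx .
\eeq

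First I would dispose of the genuinely nonlinear term. Because $s > 3/2$ forces $H^s \subset C^1(\T)$, the function $u(t)^3$ is $C^1$ and $1$-periodic, so $\int_\T u^2 u_x \, dx = \frac{1}{3} \int_\T \d_x(u^3) \, dx = 0$. For the nonlocal term I would exploit that $Q$ commutes with $\d_x$, so that $Q u_x = \d_x Q u$ acts on the Fourier side (with modes $e^{2\pi i k x}$ on $\T = \R/\Z$) as multiplication by $2\pi i k / (1 + 4\pi^2 k^2)$, whose modulus is bounded by $1/2$ for every $k \in \Z$. By Plancherel this yields $\ltwo{Q u_x} \leq \ltwo{u}$, and Cauchy--Schwarz then gives $| \int_\T u \, Q u_x \, dx | \leq \ltwo{u} \, \ltwo{Q u_x} \leq \ltwo{u}^2$.

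Feeding these two facts into \eqref{energyidentity} produces the differential inequality $\diff{t} \ltwo{u(t)}^2 \leq 2 \ltwo{u(t)}^2$, and Gr\"onwall's lemma (or direct integration of $\diff{t} \log \ltwo{u(t)}^2 \leq 2$) gives $\ltwo{u(t)}^2 \leq e^{2t} \ltwo{u_0}^2$, which is exactly \eqref{L2bound}. The only real care is in justifying the formal energy identity \eqref{energyidentity} at the available regularity --- the differentiation of the squared norm and the periodicity argument for the nonlinear term --- but these are classical once $s > 3/2$, so I do not expect a serious obstacle. I would remark in passing that the nonlocal term is in fact antisymmetric: self-adjointness of $Q$ together with integration by parts give $\int_\T u \, \d_x Q u \, dx = - \int_\T u \, \d_x Q u \, dx$, so it vanishes and $\ltwo{u(t)}$ is actually conserved; the cruder multiplier bound used above is, however, entirely sufficient for \eqref{L2bound} and is the more robust route.
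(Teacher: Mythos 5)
Your proposal is correct and follows essentially the same route as the paper: pair the equation with $u$, kill the cubic term by periodicity of $\partial_x(u^3)$, bound the nonlocal term via the operator norm estimate $\|Q\partial_x v\|_{L^2}\le \|v\|_{L^2}$ (which the paper obtains through the slightly stronger Fourier-side bound $\|Q\partial_x v\|_{H^1}\le\|v\|_{L^2}$), and close with Gr\"onwall to get $\|u(t)\|_{L^2}^2\le e^{2t}\|u_0\|_{L^2}^2$. Your passing observation that $\int_{\T}u\,Qu_x\,dx$ actually vanishes by antisymmetry, so that $\|u(t)\|_{L^2}$ is conserved, is correct and sharper than what either argument needs, though the paper does not record it.
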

\begin{proof} We know that 
$$
   u_t(t) + \frac{3}{2} u(t) u_x(t) = Q (u_x(t))
$$
holds with equality in $L^2(\T)$ for every $t$. We may multiply the above equation by $u(t)$  (note that $u(t)^2 u_x(t) \in H^1 \cdot H^1 \cdot L^2 \subset H^1 \cdot L^2 \subset L^2 \cdot L^2 \subset L^1$, since $H^1(\T)$ is an algebra) and obtain, with equality in $L^1$,
$$
  \frac{1}{2} \diff{t} \!\left(u(t)^2\right) + 
     \frac{3}{2} \frac{1}{3} \d_x \!\left( u(t)^3\right) = u(t) Q(\d_x u(t)),
$$
which upon spatial integration over $\T$ and noting that $(Q \circ \d_x) u(t) \in H^{s+1} \subset H^1$ gives
$$
    \frac{1}{2} \diff{t} \norm{u(t)}{L^2}^2 = \inp{u(t)}{(Q \circ \d_x) u(t)} \leq
       \norm{u(t)}{L^2} \norm{(Q \circ \d_x) u(t)}{L^2}
       \leq \norm{u(t)}{L^2} \norm{(Q \circ \d_x) u(t)}{H^1}.
$$
The operator $Q \circ \d_x$ is bounded $L^2(\T) \to H^1(\T)$, in fact, $\norm{(Q \circ \d_x) v}{H^1} \leq \norm{v}{L^2}$ holds for any $v \in L^2(\T)$ (as can be seen from the definition of $H^1(\T)$ via Fourier series representation and from Parseval's formula), hence we deduce further that 
$$
   \diff{t} \norm{u(t)}{L^2}^2 \leq \norm{u(t)}{L^2} \norm{u(t)}{L^2} = 
      \norm{u(t)}{L^2}^2,
$$
which implies $\norm{u(t)}{L^2}^2 \leq e^{2  t} \norm{u_0}{L^2}^2$
as claimed.
\end{proof}

Now we are in a position to show boundedness of the $L^\infty$ norm of the solution in case of a finite maximal life span. An interesting aspect of the following proof is that it does make use of the method of characteristics.

\begin{proposition}\label{linfboundedprop} If $T < \infty$, then
\beq\label{Linftybounded}
    \sup_{t \in [0,T[} \norm{u(t)}{L^\infty} < \infty.
\eeq
\end{proposition}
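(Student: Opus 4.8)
The plan is to couple the method of characteristics with the $L^2$ bound from the preceding Lemma. First I would introduce the characteristic flow $q(x,t)$ associated with the transport part $\frac{3}{2} u u_x$ of \eqref{FWEqu}, namely the solution of the ODE
$$
   \diff{t} q(x,t) = \frac{3}{2}\, u(q(x,t),t), \qquad q(x,0) = x.
$$
Since $s > 3/2$ gives $H^s(\T) \hookrightarrow C^1(\T)$ and $H^{s-1}(\T) \hookrightarrow C^0(\T)$, the regularity $u \in C([0,T[,H^s) \cap C^1([0,T[,H^{s-1})$ implies that $u$, $u_x$ and $u_t$ are all jointly continuous on $\T \times [0,T[$, so that in fact $u \in C^1(\T \times [0,T[)$. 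In particular the velocity field $\frac{3}{2} u$ is continuous in $t$ and Lipschitz in the spatial variable, uniformly on compact time subintervals, so by the Picard--Lindel\"of theorem the flow $q(\cdot,t)$ exists and is unique; solving the ODE backward in time shows that $q(\cdot,t) \col \T \to \T$ is a (surjective) diffeomorphism for each fixed $t \in [0,T[$.

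The key observation is that along the characteristics the equation collapses to an ODE with a mild right-hand side. Differentiating $t \mapsto u(q(x,t),t)$ by the chain rule and inserting \eqref{FWEqu} I obtain
$$
   \diff{t} u(q(x,t),t) = u_t(q,t) + \frac{3}{2} u(q,t)\, u_x(q,t) = (Q \circ \d_x) u\,(q(x,t),t).
$$
Thus the only source of growth of $u$ along a characteristic is the nonlocal term $(Q \circ \d_x) u$, which I claim is \emph{uniformly} bounded in $L^\infty$ on $[0,T[$. Indeed, $Q \circ \d_x$ maps $L^2(\T)$ boundedly into $H^1(\T)$ (this was already established and used in the proof of the Lemma), and $H^1(\T) \hookrightarrow L^\infty(\T)$; combining this with the estimate \eqref{L2bound} and the hypothesis $T < \infty$ yields, with $M := C\, e^{T} \norm{u_0}{L^2} < \infty$,
$$
   \norm{(Q \circ \d_x) u\,(t)}{L^\infty} \leq C\, \norm{(Q \circ \d_x) u\,(t)}{H^1} \leq C\, \norm{u(t)}{L^2} \leq M
$$
for all $t \in [0,T[$.

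Finally I would integrate the characteristic ODE. Combining the last two displays gives, for every $x \in \T$ and $t \in [0,T[$,
$$
   |u(q(x,t),t)| \leq |u_0(x)| + \int_0^t \norm{(Q \circ \d_x) u\,(\tau)}{L^\infty}\, d\tau \leq \norm{u_0}{L^\infty} + M T.
$$
Since $q(\cdot,t)$ is surjective onto $\T$, as $x$ runs through $\T$ the point $q(x,t)$ runs through all of $\T$, so the supremum over $x$ of the left-hand side equals $\norm{u(t)}{L^\infty}$; hence $\sup_{t \in [0,T[} \norm{u(t)}{L^\infty} \leq \norm{u_0}{L^\infty} + M T < \infty$, which is \eqref{Linftybounded}. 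The estimates themselves are short, so I expect the main point requiring care to be the regularity bookkeeping that legitimizes the flow and the chain rule --- i.e.\ verifying that $s > 3/2$ really does place $u$ in $C^1(\T \times [0,T[)$ --- while the conceptual crux is simply the observation that $Q \circ \d_x$ gains two derivatives, thereby converting the already-controlled $L^2$ bound into an $L^\infty$ bound on the driving term.
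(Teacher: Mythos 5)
Your proof is correct and follows essentially the same route as the paper: integrate along the characteristics of the transport part, observe that $\frac{d}{dt}u(q(x,t),t)=(Q\circ\d_x)u$ there, and bound this driving term in $L^\infty$ via the $L^2\to H^1$ boundedness of $Q\circ\d_x$, the embedding $H^1(\T)\hookrightarrow L^\infty(\T)$, and the exponential $L^2$ bound of the preceding Lemma. The only cosmetic difference is that the paper runs a backward characteristic through an arbitrary point $(y,\tau)$ instead of invoking surjectivity of the forward flow, and you correctly use the speed $\frac{3}{2}u$ (the paper's displayed ODE omits the factor $\frac{3}{2}$ even though its subsequent chain-rule computation requires it).
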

\begin{proof} Let $y \in \T$ and $\tau \in [0,T[$ be arbitrary. We consider the characteristic ordinary differential equation with initial condition for a curve $\ga \col [0,T[\, \to \T$ corresponding to the solution $u$, that is, 
\beq
    \dot \ga(s) = u(\ga(s),s), \quad \ga(\tau) = y.
\eeq
Note  that $u \in C([0,T[, H^s(\T)) \subseteq C([0,T[, C^1(\T))$, since $H^s(\T) \subset C^1(\T)$, hence $(x,t) \mapsto u(x,t)$ is continuous on $[0,T[\, \times \T$  and globally Lipschitz with respect to $x$. By compactness of $\T$ we therefore have a unique global solution $\ga \in C^1([0,T[,\T)$ to the characteristic initial value problem above. 

By standard reasoning we obtain
\begin{multline*}
  \diff{s} \big(u(\ga(s),s)\big) = u_x(\ga(s),s)\, \dot\ga(s) + u_t (\ga(s),s) =
  u_x(\ga(s),s)\, u(\ga(s),s) + u_t (\ga(s),s)\\ = (Q u_x)(\ga(s),s)
\end{multline*} 
and therefore,
$$
   u(y,\tau) = u(\ga(\tau),\tau) = u(\ga(0),0) + \int_0^\tau  (Q u_x)(\ga(s),s) \, ds
     = u_0(\ga(0)) + \int_0^\tau  (Q u_x)(\ga(s),s) \, ds,
$$
which implies
\beq\label{charestimate}
   |u(y,\tau)| \leq \norm{u_0}{L^\infty} + \int_0^\tau  \left|(Q u_x)(\ga(s),s)\right| \, ds.
\eeq
Observe that for every $r \in [0,T[$ we have $Q u_x(r) \in Q(H^{s-1}) = H^{s+1} \subset H^1 \subset L^\infty$ and we may again employ boundedness of $Q \circ \d_x$ as operator $L^2 \to H^1$ to obtain 
$$
   \left|(Q u_x)(\ga(s),s)\right| \leq \norm{Q u_x (s)}{L^\infty} \leq 
      \norm{Q u_x (s)}{H^1} = \norm{(Q \circ \d_x) u(s)}{H^1} \leq \norm{u(s)}{L^2}.
$$
Inserting this into \eqref{charestimate} yields
$$
    |u(y,\tau)| \leq \norm{u_0}{L^\infty} +  \int_0^\tau  \norm{u(s)}{L^2}  \, ds
$$
and applying \eqref{L2bound} then gives
$$
   |u(y,\tau)| \leq \norm{u_0}{L^\infty} + 
    \norm{u_0}{L^2} \int_0^\tau  e^{s} \, ds 
   = \norm{u_0}{L^\infty} + \norm{u_0}{L^2} (e^{ \tau} - 1)
   \leq \norm{u_0}{L^\infty} + \norm{u_0}{L^2} (e^{T} - 1).
$$
Since $y$ and $\tau$ were arbitrary and the upper bound obtained is independent of these ingredients the proof is complete.
\end{proof}

In combination of the previous two propositions we directly conclude as follows.

\begin{corollary}\label{CorWaveBreak} If $T < \infty$ then wave breaking occurs for the solution $u$ at time $T$.
\end{corollary}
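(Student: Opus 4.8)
The plan is to obtain the corollary by pure bookkeeping: unravel the definition of wave breaking recorded in \eqref{wavebreaking} and observe that, under the standing hypothesis $T < \infty$, each of its two defining clauses has already been established separately in the preceding results.

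First I would recall that wave breaking for $u$ at time $T$ amounts to the conjunction of the boundedness condition $\sup_{t \in [0,T[} \norm{u(t)}{L^\infty} < \infty$ together with the slope blow-up condition $\limsup_{t \uparrow T} \norm{u_x(t)}{L^\infty} = \infty$. The first of these is exactly \eqref{Linftybounded}, furnished by Proposition \ref{linfboundedprop}; the second is exactly \eqref{uxunbounded}, furnished by the first proposition of this section. Both conclusions are valid precisely under the assumption $T < \infty$, so invoking the two results simultaneously reproduces \eqref{wavebreaking} verbatim, which is the definition of wave breaking at time $T$.

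Since nothing beyond matching the two established conclusions against the definition is needed, I do not expect any genuine obstacle at this stage: the substantive work resides in the two preceding propositions, and in particular in the characteristic-curve argument underlying \eqref{Linftybounded}, while the corollary itself is merely their conjunction.
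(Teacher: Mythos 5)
Your proposal is correct and coincides with the paper's own argument: the corollary is stated immediately after the remark ``In combination of the previous two propositions we directly conclude as follows,'' i.e., it is obtained exactly by conjoining \eqref{uxunbounded} and \eqref{Linftybounded} to match the definition \eqref{wavebreaking}. No further comment is needed.
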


We will now prove a more precise result in case of a slightly more regular initial value, namely $u_0 \in H^2(\T)$ in place of $u_0 \in H^s(\T)$ with merely $s > 3/2$. 

\begin{theorem}\label{wavebreakingthm2} If  $s=2$ and $T < \infty$ then the wave solution $u$ breaks with negative infinite slope at time $T$. More precisely, we have 
$$
  \sup_{t \in [0,T[} \norm{u(t)}{L^\infty(\T)} < \infty 
$$  
while 
$$
  \liminf_{t \uparrow T} \left( \inf_{x \in \T} u_x(x,t) \right) = -\infty.
$$
\end{theorem}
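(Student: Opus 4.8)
The first assertion $\sup_{t\in[0,T[}\norm{u(t)}{L^\infty}<\infty$ is already contained in Proposition~\ref{linfboundedprop}, so the substance lies in the statement about the slope. The plan is to differentiate \eqref{FWEqu} in $x$, to convert the resulting top-order term into a zeroth-order one by means of the operator identity $Q\,\d_x^2 = Q - \mathrm{id}$ (which follows from $(\mathrm{id}-\d_x^2)Q=\mathrm{id}$), and thereby to obtain a Riccati-type control for the \emph{maximum} of $u_x$. The guiding idea is that, since $\sup_{t}\norm{u(t)}{L^\infty}$ and $\sup_t\norm{u(t)}{L^2}$ are finite on $[0,T[$, the quantity $\max_x u_x(\cdot,t)$ can never escape to $+\infty$; once this is known, the blow-up of $\norm{u_x(t)}{L^\infty}$ recorded in \eqref{uxunbounded} is forced to occur through $\min_x u_x(\cdot,t)\to-\infty$.

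First I would record that for $s=2$ one has $u\in C([0,T[,H^2)\cap C^1([0,T[,H^1)$, so that $u_x(\cdot,t)\in H^1\subset C(\T)$ and its maximum over the compact group $\T$ is attained. Differentiating \eqref{FWEqu} with respect to $x$ and using $\d_x Q = Q\d_x$ together with $Q\d_x^2 = Q-\mathrm{id}$ gives, with equality in $L^2(\T)$,
$$ u_{tx} + \tfrac{3}{2}\, u\, u_{xx} = -\tfrac{3}{2}\, u_x^2 + Qu - u . $$
Next I would fix the zeroth-order data by setting $C_0 := \sup_{t\in[0,T[}\big(\norm{Qu(t)}{L^\infty}+\norm{u(t)}{L^\infty}\big)$ and checking $C_0<\infty$: the term $\norm{u(t)}{L^\infty}$ is bounded by Proposition~\ref{linfboundedprop}, while boundedness of $Q\col L^2\to H^2$, the embedding $H^2(\T)\subset L^\infty(\T)$, and the $L^2$-estimate \eqref{L2bound} give $\norm{Qu(t)}{L^\infty}\le C\norm{u(t)}{L^2}\le C\,e^{T}\norm{u_0}{L^2}$.

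The central step is to control $M(t):=\max_{x\in\T}u_x(x,t)$. Appealing to the standard lemma of Constantin and Escher on the almost-everywhere differentiability of the pointwise extremum of $u_x$ (valid at the present regularity $u\in C([0,T[,H^2)\cap C^1([0,T[,H^1)$), the function $M$ is locally Lipschitz and satisfies $M'(t)=u_{tx}(\eta(t),t)$ for almost every $t$, where $\eta(t)$ is a point at which the maximum is attained; at such a point one has $u_{xx}(\eta(t),t)=0$, whence the differentiated equation reduces to
$$ M'(t) = -\tfrac{3}{2}\,M(t)^2 + (Qu-u)(\eta(t),t) \le -\tfrac{3}{2}\,M(t)^2 + C_0 \qquad \text{for a.e. } t\in[0,T[ . $$
A barrier argument against the constant $K:=\sqrt{2C_0/3}$ (the positive root of $-\tfrac32 y^2+C_0=0$) then shows $M(t)\le\max\!\big(\max_x u_{0,x},\,K\big)=:M^*$ for all $t\in[0,T[$: indeed, whenever $M>K$ one has $M'<0$ a.e., so $M$ cannot cross the level $\max(\max_x u_{0,x},K)$ from below. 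With $\max_x u_x(\cdot,t)\le M^*$ in hand and $\norm{u_x(t)}{L^\infty}=\max\big(\max_x u_x(\cdot,t),\,-\min_x u_x(\cdot,t)\big)$, the blow-up \eqref{uxunbounded} forces $\limsup_{t\uparrow T}\big(-\min_x u_x(\cdot,t)\big)=\infty$, that is, $\liminf_{t\uparrow T}\inf_{x\in\T}u_x(x,t)=-\infty$, which together with Proposition~\ref{linfboundedprop} is the claim.

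I expect the only genuinely delicate point to be the differentiation of the pointwise maximum and, above all, the identity $u_{xx}(\eta(t),t)=0$ at a maximizer: at the regularity $u_x(\cdot,t)\in H^1$ this function is merely absolutely continuous rather than $C^1$, so a maximizer need not be a point of classical differentiability of $u_x$. This is exactly the reason for strengthening the hypothesis to $s=2$, and the rigorous justification rests on the Constantin--Escher differentiability lemma.
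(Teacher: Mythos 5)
Your route is genuinely different from the paper's, and its outline is sound: the paper argues by contradiction through an energy estimate (assuming a uniform lower bound $u_x\geq -M$, it applies $\d_x^2$ to \eqref{FWEqu}, tests with $u_{xx}$, and gets $\sup_{t<T}\norm{u_{xx}(t)}{L^2}<\infty$ by Gronwall, contradicting the first half of \eqref{prepblowup}), whereas you differentiate once, use $Q\d_x^2=Q-\mathrm{id}$, and close a Riccati inequality $M'\leq -\tfrac32 M^2+C_0$ for $M(t)=\max_x u_x(x,t)$ along the maximizer; combined with \eqref{uxunbounded} this would indeed force $\liminf_{t\uparrow T}\min_x u_x(x,t)=-\infty$. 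The algebra (the identity $Qu_{xx}=Qu-u$, the finiteness of $C_0$ via \eqref{L2bound} and Proposition \ref{linfboundedprop}, the barrier at $\sqrt{2C_0/3}$, and the final deduction) is all correct, and your method even yields a quantitative upper bound on $\max_x u_x$ that the paper's argument does not.

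However, there is a genuine gap exactly at the point you flag and then dismiss. At $s=2$ the solution lies only in $C([0,T[,H^2)\cap C^1([0,T[,H^1)$, so $u_{tx}(\cdot,t)$ and $u_{xx}(\cdot,t)$ are merely elements of $L^2(\T)$: the evaluation $M'(t)=u_{tx}(\eta(t),t)$ and the identity $u_{xx}(\eta(t),t)=0$ are not meaningful, and even the Lipschitz continuity of $t\mapsto M(t)$ is unavailable, since it would require control of $u_{tx}$ in $L^\infty$ rather than $L^2$. The Constantin--Escher lemma (Lemma \ref{inflemma}) does not cover this case: it is stated for $v\in C^1([0,T[,H^2)$ and concerns the extrema of $v_x$, so applying it to the extrema of $u_x$ requires $u\in C^1([0,T[,H^2)$, i.e.\ essentially $u_0\in H^3$ --- one derivative more than you have. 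The repair is the same device the paper itself employs in its proof of Theorem \ref{wavebreakingthm2}: first prove the bound $\max_x u_x(x,t)\leq\max\big(\max_x u_0',\sqrt{2C_0/3}\big)$ for regularized data $u_{0,\eps}\in H^3(\T)$, note that this bound depends only on $\max_x u_0'$, $\norm{u_0}{L^2}$, $\norm{u_0}{L^\infty}$ and $T$, all stable under $H^2$-convergence of the data, and then pass to the limit using continuous dependence, which gives $u_{\eps,x}\to u_x$ in $C([0,T_0],C(\T))$ for every $T_0<T$. With that insertion your proof is complete.
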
 
\begin{proof} The boundedness of $\norm{u(t)}{L^\infty(\T)}$ follows from \eqref{Linftybounded}. The strategy of proof is to show that the negation of the last assertion, i.e., 
\beq\label{negassertion}
  \exists M \geq 0 \col \quad u_x(x,t) \geq - M \quad \forall x \in \T\, \forall t \in [0,T[,
\eeq
 leads to the boundedness of $\norm{u(t)}{H^2}$ as $t$ approaches $T$, which then causes a contradiction due to the first part of \eqref{prepblowup} and the fact that $T$ is supposed to be the maximal life span. 

To reach the contradictory conclusion about the $H^2$ norm, we employ the following line of arguments: 
Since $T < \infty$ we deduce from \eqref{L2bound} that $\sup_{t \in [0,T[} \norm{u(t)}{L^2} < \infty$; moreover, an application of the Poincar\'{e}-Wirtinger inequality (\cite[p.\ 312]{Brezis11}) to the $1$-peridodic function $u_x(t) \in H^1(\T)$ (and noting that $\int_{\T} u_x(x,t)\,dx = 0$) gives $\norm{u_x(t)}{L^2} \leq C \norm{u_{xx}(t)}{L^2}$ for some constant $C > 0$. Therefore, in the current situation we may note the validity of the implication 
$$
    \sup_{t \in [0,T[} \norm{u_{xx}(t)}{L^2} < \infty \quad\Longrightarrow\quad
    \sup_{t \in [0,T[} \norm{u(t)}{H^2} < \infty.
$$ 
Thus, it suffices to show that \eqref{negassertion} implies $\sup_{t \in [0,T[} \norm{u_{xx}(t)}{L^2} < \infty$ and the proof will be complete. More precisely, we will show the following\\[2mm]
\emph{Claim:} \enspace \eqref{negassertion} $\quad\Longrightarrow\quad$ $\displaystyle{\sup_{t \in [0,T[} \norm{u_{xx}(t)}{L^2} \leq  e^{(1 + \frac{15 M}{4}) T} \norm{u_0''}{L^2}}$.

\medskip

As a final technical reduction, we note that, due to well-posedness in the solution space $C([0,T[,H^2)$, the inequality asserted above is stable under $H^2$-limits $u_{0,\eps} \to u_0$ (as $\eps \to 0$) of regularizations $u_{0,\eps} \in H^3(\T)$ of the initial value. Therefore, it suffices to establish the claim for $u_0 \in H^3(\T)$,  in which case we have for the solution $u \in C([0,T[,H^3) \cap C^1([0,T[,H^2)$.

We may apply $\d_x^2$ to Equation \eqref{FWEqu} and obtain
$$
  Q u_{xxx} (t) = \d_x^2 \left(u_t(t) + \frac{3}{2} u(t) u_x(t)\right) =
    u_{txx}(t) + \frac{9}{2} u_x(t) u_{xx}(t) + \frac{3}{2} u(t) u_{xxx}(t), 
$$
which holds as an equation in $L^2(\T)$ for every $t \in [0,T[$, since $Q$ maps $L^2$ into $H^2 \subset L^2$, $u_x(t) u_{xx}(t) \in H^2 \cdot H^1 \subset H^1 \subset L^2$, and $u(t) u_{xxx}(t) \in H^3 \cdot L^2 \subset L^\infty \cdot L^2 \subset L^2$. Multiplication of the above equation by $u_{xx}(t)$ and integrating over $\T$ gives
\begin{multline*}
  \inp{u_{xx}(t)}{(Q \circ \d_x)(u_{xx}(t))} =\\ 
  = \frac{1}{2} \diff{t} \norm{u_{xx}(t)}{L^2}^2
    + \frac{9}{2} \int_{\T} u_x(x,t) u_{xx}(x,t)^2 \, dx + 
    \frac{3}{2} \int_{\T} u(x,t) u_{xx}(x,t) u_{xxx}(x,t)\, dx,
\end{multline*}
where the last integrand is of the form $u\, u_{xx} u_{xxx} =  u\, \d_x (u_{xx}^2)/2$ and an integration by parts yields
$$
  \inp{u_{xx}(t)}{(Q \circ \d_x)(u_{xx}(t))} 
  = \frac{1}{2} \diff{t} \norm{u_{xx}(t)}{L^2}^2
    + \frac{15}{4} \int_{\T} u_x(x,t) u_{xx}(x,t)^2 \, dx.
$$
We rewrite this in the form
$$
   \diff{t} \norm{u_{xx}(t)}{L^2}^2 = 2 \inp{u_{xx}(t)}{(Q \circ \d_x)(u_{xx}(t))}
    + \frac{15}{2} \int_{\T} \underbrace{\big(- u_x(x,t) \big)}_{\leq M} u_{xx}(x,t)^2 \, dx
$$
and by \eqref{negassertion} deduce (again using that $Q \circ \d_x$ is bounded $L^2 \to H^1$)
\begin{multline*}
   \diff{t} \norm{u_{xx}(t)}{L^2}^2 \leq 
     2 \norm{u_{xx}(t)}{L^2} \norm{(Q \circ \d_x) (u_{xx}(t))}{L^2} 
     + \frac{15 M}{2} \norm{u_{xx}(t)}{L^2}^2 \leq \\
     \leq 2  \norm{u_{xx}(t)}{L^2}^2 + \frac{15 M}{2} \norm{u_{xx}(t)}{L^2}^2
     = \left(2  + \frac{15 M}{2}\right) \norm{u_{xx}(t)}{L^2}^2.
\end{multline*}
Integration with respect to time and Gronwall's lemma now imply
$$
\forall t \in [0,T] \col \quad 
   \norm{u_{xx}(t)}{L^2}^2 \leq 
   \norm{u_{xx}(0)}{L^2}^2 \, e^{(2  + \frac{15 M}{2}) t} \leq
   \norm{u_0''}{L^2}^2 \, e^{2 (1 + \frac{15 M}{4}) T},
$$
which proves the claim.
\end{proof}

\begin{remark} In retrospect, we have shown in Corollary \ref{CorWaveBreak} that any solution according to \eqref{uniquesol} with a finite maximal life span and with initial value $u_0 \in H^s(\T)$, $s > 3/2$, suffers wave breaking. Under the condition $s \geq 2 > 3/2$ we add in Theorem \ref{wavebreakingthm2} the qualitive information that this breaking wave develops a singularity with \emph{negative infinite slope}. The reason for requiring the slightly higher regularity on the initial data is techniqual, because the proof aims at establishing the crucial inequality $\sup_{t \in [0,T[} \norm{u_{xx}(t)}{L^2} \leq  C \norm{u_0''}{L^2}$ upon operating with a second order spatial derivative on the basic equation \eqref{FWEqu} and using classical energy estimates with some care. The result might still hold with the relaxed condition $s > 3/2$, but we expect that a proof would require a more heavy machinery from function space theory and would be less direct. If we focus on qualitative aspects of the solutions and in view of the equally important blow-up scenario established in the following section, which uses $s \geq 3$, the setting of Theorem \ref{wavebreakingthm2} suffices.
\end{remark}


\section{Blow-up for a class of initial data}

We will show below that for a considerable class of initial wave profiles the maximal life span of the solution is finite and blow-up occurs in the form of wave breaking. The result as well as a large part of the reasoning leading to it is similar to corresponding statements and proofs in \cite{ConstantinEscher1998} for the case of the real line and initial data in $H^\infty(\R)$ (matching with the well-posedness result from \cite{NaumShish94}). The main differences now lie in alternative estimates required for the convolution kernel of the operator $Q$ on the torus and in the fact that here we may work with initial data of lower regularity due to the more recent and improved well-posedness results available (\cites{Holmes16,HolTho17}).

As a preparation we collect a few details about the convolution kernel $K$ implementing the translation invariant operator $Q = (\text{id} - \d_x^2)^{-1}$ on the one-dimensional torus $\T$. The operator $Q$ corresponds to the Fourier multiplier $\FT{K}(k) = 1 / (1 + 4 \pi^2 k^2)$ ($k \in \Z$), which satisfies $\FT{K} \in l^1(\Z) \subset l^2(\Z)$. Therefore $K$ possesses the Fourier series representation
$$
   \sum_{k \in \Z} \frac{e^{2 \pi i k x}}{1 + 4 \pi^2 k^2}
$$  
in the sense of $L^2(\T)$ and the Fourier series itself converges uniformly due to a classic theorem by Weierstra{\ss}, since the sum of the $L^\infty$-norms is convergent. Let the uniform limit be denoted by $F \in C(\T)$. Since $K$ and $F$ both belong to $L^2(\T)$ and 
have the same Fourier series, they agree as classes in $L^2(\T)$, in particular $K(x) = F(x)$ for almost all $x \in \T$, and we may identify $K$ as $L^2$ class with the function $F$. We can obtain a more explicit expression for $F$ (hence $K$) by appealing to the Poisson summation formula (cf.\ \cite[Theorem 3.1.17]{Grafakos04}): Consider $f \in L^1(\R) \cap C(\R)$, given by $f(x) = e^{-|x|}$ and its Fourier transform $\FT{f}$ on the real line 
$$
   \FT{f}(\xi) = \int_{\R} e^{- 2 \pi i x \xi} f(x)\, dx = \frac{2}{1 + 4 \pi \xi^2}; 
$$
thus $\FT{f} \in L^1(\R) \cap C(\R)$ as well and $|f(y)| + |\FT{f}(y)| \leq c /(1 + y^2)$ holds with some constant $c > 0$ for all $y \in \R$; by Poisson's summation formula, we arrive at the following equality of $1$-periodic functions on the real line 
$$
      2 \sum_{k \in \Z} \frac{e^{2 \pi i k x}}{1 + 4 \pi^2 k^2} =
        \sum_{k \in \Z}  \FT{f}(x) e^{2 \pi i k x} = \sum_{k \in \Z}  f(x+k)
        =  \sum_{k \in \Z}  e^{-|x+k|}
        \quad  \text{pointwise for every } x \in \R.
$$ 
Therefore, we have in particular for every $x \in \T$,
$$
  F(x) = \sum_{k \in \Z} \frac{e^{2 \pi i k x}}{1 + 4 \pi^2 k^2} = 
  \frac{1}{2} \sum_{k \in \Z}  e^{-|x+k|},
$$
which allows to evaluate the explicit expression
\begin{multline*}
   F(x) = \frac{1}{2} \left( \sum_{l \in \Z, \atop l \geq 1}  e^{-|x - l|} +  
   \sum_{l \in \Z, \atop l \geq 0}  e^{-|x + l|} \right) =
   \frac{1}{2} \left( \sum_{l=1}^\infty  e^{x - l} +  
   \sum_{l = 0}^\infty  e^{- x - l} \right) =\\ 
   = \frac{1}{2} \left( \frac{e^x}{e-1} +  \frac{e^{1-x}}{e-1} \right)
   = \frac{e^x + e^{1-x}}{2(e-1)}.
\end{multline*}
The formula confirms again that $F$ is continuous (since $F(0) = \lim_{t \uparrow 1} F(t)$), but also reveals that $F$ is $C^1$ on $\T \setminus \{0\}$, piecewise $C^1$ on $\T$ in the sense that the derivative is continuous off $x = 0$ and possesses one-sided limits at $x = 0$, and has a non-differentiable peak at $x = 0$. Thus $K$ is absolutely continuous with piecewise continuous derivative $K'$ (possessing one-sided limits at the only point of discontinuity $x=0$). We have 
\beq\label{Kderivative}
   K'(x) = \frac{e^x - e^{1-x}}{2(e-1)} \quad \forall x \neq 0.
\eeq

Another substantial ingredient in the proof of the blow-up result below is an accurate description of the evolution of spatial extrema of functions $v \in C^1([0,T[,H^2(\T))$, which we may transfer to periodic functions without essential changes from the original version proved for functions on the real line in \cite[Theorem 2.1]{ConstantinEscher1998} (see also \cite[Subsection 6.3.2]{Constantin2011} or Escher's lecture in \cite{ConstEschJohnVill2016}). In fact, due to compactness of $\T$ and the embedding $H^2(\T) \subset C^1(\T)$, the assertion about existence of a location where the extremum is attained is obvious in this case, and the statement on differentiability almost everywhere is proven in exactly the same way.
\begin{lemma}\label{inflemma} If $T > 0$ and $v \in C^1([0,T[,H^2(\T))$, then for every $t \in [0,T[$ there is $\xi(t) \in \T$ such that 
$$
  m(t) := \min_{x \in \T} v_x(x,t) = v_x(\xi(t),t). 
$$
The function $m \col [0,T[ \to \R$ is Lipschitz continuous, in particular, differentiable almost everywhere on $]0,T[$, and satisfies
$$
  m'(t) = v_{tx}(\xi(t),t) \quad \text{for almost every } t \in\, ]0,T[.
$$
\end{lemma}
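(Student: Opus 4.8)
The plan is to follow the classical argument for the evolution of extrema, as in \cite[Theorem 2.1]{ConstantinEscher1998}, adapted to the compact setting of $\T$ where the existence of a minimizer is immediate. First I would record the regularity consequences of the hypothesis: since $v \in C^1([0,T[,H^2(\T))$ and $\d_x \col H^2(\T) \to H^1(\T)$ is bounded and linear, we have $v_x \in C^1([0,T[,H^1(\T))$ with $\diff{t} v_x(\cdot,t) = v_{tx}(\cdot,t)$ holding in $H^1(\T)$. Using the continuous embedding $H^1(\T) \subset C(\T)$ together with the continuity of the point evaluation $\delta_x \col H^1(\T) \to \R$, it follows that $(x,t) \mapsto v_{tx}(x,t)$ is jointly continuous on $\T \times [0,T[$ and that, for every fixed $x$, the real-valued map $t \mapsto v_x(x,t)$ is $C^1$ with derivative $v_{tx}(x,t)$. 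The existence of $\xi(t)$ with $m(t) = v_x(\xi(t),t)$ is then clear, since $x \mapsto v_x(x,t)$ is continuous on the compact torus.

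Next I would establish Lipschitz continuity of $m$ on every compact subinterval $[0,T_0] \subset [0,T[$. Fixing such a $T_0$ and setting $L := \sup\{|v_{tx}(x,s)| : x \in \T, \, s \in [0,T_0]\} < \infty$ (finite by joint continuity and compactness), the mean value theorem in the time variable gives $|v_x(\xi,t) - v_x(\xi,\tau)| \leq L|t-\tau|$ uniformly in $\xi \in \T$. Choosing minimizers $\xi(t),\xi(\tau)$ and using the minimality inequalities $m(t) \leq v_x(\xi(\tau),t)$ and $m(\tau) \leq v_x(\xi(t),\tau)$ yields the two-sided bound $|m(t)-m(\tau)| \leq L|t-\tau|$. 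Hence $m$ is Lipschitz, and by the classical almost-everywhere differentiability of Lipschitz functions (Rademacher's theorem) it is differentiable almost everywhere on $]0,T[$.

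The heart of the matter is the identity $m'(t) = v_{tx}(\xi(t),t)$ at points of differentiability, and here the only delicate point is that the minimizer $\xi(t)$ need be neither unique nor continuous in $t$. The device that circumvents this is to use one and the same minimizer $\xi(t)$ in both one-sided difference quotients. For $h > 0$, the inequality $m(t+h) \leq v_x(\xi(t),t+h)$ combined with $m(t) = v_x(\xi(t),t)$ gives $\frac{m(t+h)-m(t)}{h} \leq \frac{v_x(\xi(t),t+h)-v_x(\xi(t),t)}{h}$, and letting $h \downarrow 0$ produces $m'(t) \leq v_{tx}(\xi(t),t)$. Symmetrically, for $k > 0$ the inequality $m(t-k) \leq v_x(\xi(t),t-k)$ gives, after dividing by $-k < 0$ (which reverses the inequality), $\frac{m(t-k)-m(t)}{-k} \geq \frac{v_x(\xi(t),t-k)-v_x(\xi(t),t)}{-k}$, and letting $k \downarrow 0$ produces $m'(t) \geq v_{tx}(\xi(t),t)$. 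Combining the two inequalities yields the claimed formula.

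The main obstacle is precisely this last step: one must resist the temptation to track $\xi(t \pm h)$ as $h \to 0$, which would require an unavailable continuity of the minimizer, and instead exploit the minimality of $\xi(t)$ against the competitor value $v_x(\xi(t), t \pm h)$, for which the time regularity of $v_x$ at the single point $\xi(t)$ established in the first paragraph is all that is needed.
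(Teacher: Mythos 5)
Your proof is correct and follows essentially the same route as the paper, which simply defers to the classical argument of Constantin and Escher (Theorem 2.1 of their 1998 paper), noting that compactness of $\T$ and the embedding $H^2(\T)\subset C^1(\T)$ make the existence of the minimizer immediate and that the a.e.\ differentiability statement is proved exactly as on the real line. Your write-up supplies the details of precisely that argument — in particular the key device of testing both one-sided difference quotients against the single minimizer $\xi(t)$ — so there is nothing to add.
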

\noindent The same statement clearly holds for the maximum in place of the minimum.

After all these preparations, we formulate and prove the main result.

\begin{theorem} Let $u_0 \in H^3(\T)$ and $u$ be the unique solution to (\ref{FWEqu}-\ref{IC}) with maximal life span $T$. If 
\beq\label{blowupinitialcondition}
   \min_{x \in \T} u_0'(x) + \max_{x \in \T} u_0'(x) < - \frac{2}{3},
\eeq
then $T < \infty$ and we observe wave breaking for $u$ at time $T$.
\end{theorem}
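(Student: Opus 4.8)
The plan is to convert the finite-time breakdown into a scalar Riccati-type differential inequality for the minimal slope $m(t) := \min_{x \in \T} u_x(x,t)$, and then to invoke Corollary~\ref{CorWaveBreak} to upgrade a finite maximal life span into wave breaking. Since $u_0 \in H^3(\T)$, the well-posedness statement places the solution in $C([0,T[,H^3) \cap C^1([0,T[,H^2)$, so Lemma~\ref{inflemma} applies directly to $v = u$ with no regularization needed: there is $\xi(t) \in \T$ with $m(t) = u_x(\xi(t),t)$, the function $m$ is locally Lipschitz, and $m'(t) = u_{tx}(\xi(t),t)$ for a.e.\ $t$. I would track the maximal slope $M(t) := \max_{x} u_x(x,t) = u_x(\eta(t),t)$ in parallel.

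First I would differentiate \eqref{FWEqu} once in $x$ and use the operator identity $Q\,\d_x^2 = Q - \mathrm{id}$ (immediate from $(\mathrm{id}-\d_x^2)Q = \mathrm{id}$) to rewrite the right-hand side, obtaining
$$
   u_{tx} + \tfrac{3}{2} u_x^2 + \tfrac{3}{2} u\, u_{xx} = Qu - u .
$$
Evaluating at the interior minimum $x = \xi(t)$, where $u_{xx}(\xi(t),t) = 0$, and likewise at the interior maximum $x=\eta(t)$, I obtain the pointwise relations
$$
   m'(t) = -\tfrac{3}{2} m(t)^2 + (Qu - u)(\xi(t),t), \qquad
   M'(t) = -\tfrac{3}{2} M(t)^2 + (Qu - u)(\eta(t),t)
$$
for a.e.\ $t \in [0,T[$.

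The heart of the matter is to control the nonlocal forcing $(Qu-u)$ at the extrema. Here I would use that the convolution kernel $K$ is nonnegative with $\int_\T K = \FT{K}(0) = 1$ (both read off the explicit formula for $F=K$ derived above), so that $Qu(x)$ is a weighted average of $u$; hence $\min u \le Qu(x) \le \max u$ and therefore $|(Qu-u)(x)| \le \max u - \min u$ for every $x$. Periodicity enters decisively through $\int_\T u_x\,dx = 0$, which forces $m(t) \le 0 \le M(t)$ and, by splitting $\int_\T|u_x|$ into positive and negative parts, yields $\max u - \min u \le \min(M,-m) \le \tfrac{1}{2}(M-m)$. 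Feeding the one-sided bound $(Qu-u)(\eta) \le M$ into the $M$-equation gives $M' \le -\tfrac{3}{2}M^2 + M$, whence $M(t) \le \max(M(0),\tfrac{2}{3})$ for all $t \in [0,T[$; feeding $(Qu-u)(\xi) \le \tfrac{1}{2}(M-m)$ into the $m$-equation then closes the estimate into
$$
   m'(t) \le -\tfrac{3}{2} m(t)^2 - \tfrac{1}{2} m(t) + \tfrac{1}{2}\max\!\big(M(0),\tfrac{2}{3}\big) .
$$

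Finally I would compare $m$ against the autonomous Riccati equation $y' = -\tfrac{3}{2}y^2 - \tfrac{1}{2}y + \tfrac{1}{2}\max(M(0),\tfrac{2}{3})$. Its smaller (negative) equilibrium $y_-$ satisfies $m(0) < y_-$ whenever \eqref{blowupinitialcondition} holds, with the borderline case $M(0)\le \tfrac{2}{3}$ giving precisely $y_-=-\tfrac{2}{3}$; thus $m$ is driven strictly below $y_-$ and blows down to $-\infty$ in finite time $T_*$. Since $u \in C([0,T[,H^s) \subset C([0,T[,C^1)$ keeps $\norm{u_x(t)}{L^\infty}$ finite on every compact subinterval, the divergence $m(t)\to-\infty$ cannot occur while the solution persists, so $T \le T_* < \infty$, and Corollary~\ref{CorWaveBreak} then yields wave breaking at time $T$. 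The main obstacle I anticipate is the forcing estimate: extracting the sharp constant (equivalently the threshold $-\tfrac{2}{3}$) from $(Qu-u)$ requires combining positivity of $K$ with the mean-zero property of $u_x$ with some care, and the comparison step must be run for the merely a.e.\ differentiable $m$, which is exactly what the Lipschitz regularity furnished by Lemma~\ref{inflemma} makes legitimate.
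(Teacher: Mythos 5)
Your proof is correct, but it reaches the key differential inequalities and closes them by a genuinely different route than the paper. For the nonlocal forcing, the paper keeps the term as $K\ast u_{xx}$, integrates by parts to $-K'\ast u_x$, and uses the explicit formula \eqref{Kderivative} (whose positive and negative parts each have mass $\tfrac12$) to bound the forcing at \emph{both} extrema by $\tfrac12(m_2-m_1)$; you instead rewrite it as $Qu-u$ and use only that $K\ge 0$ with $\int_\T K=1$, so that $Qu$ is an average of $u$, combined with $\int_\T u_x\,dx=0$, to get $\max u-\min u\le\min(M,-m)\le\tfrac12(M-m)$ --- the same constant, obtained without the closed form of $K$ (though positivity is itself most easily read off from that closed form, and your splitting argument uses that $\T$ has total length $1$). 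For the ODE step, the paper adds the two inequalities to propagate the open condition $1+\tfrac32(m_1+m_2)<0$ and then substitutes back to obtain $m_1'\le-\tfrac32(m_1+\tfrac13)^2$; you decouple instead, first capping $M(t)\le\max(M(0),\tfrac23)$ via $M'\le-\tfrac32 M^2+M$ and then comparing $m$ with an autonomous Riccati equation whose lower equilibrium is $y_-=-\tfrac23$ in the borderline case --- reassuringly the same threshold. Both routes are valid; the paper's is closer to the Constantin--Escher template and slightly shorter, while yours makes the averaging property of $Q$ do all the work and is more robust to the precise kernel. Two details you should still write out: the verification that \eqref{blowupinitialcondition} forces $m(0)<y_-$ also when $M(0)>\tfrac23$ (it reduces to $\sqrt{1+12M(0)}\le 3+6M(0)$, which always holds), and the quantitative blow-down, e.g.\ via $-\tfrac32 y^2-\tfrac12 y+\tfrac12\max(M(0),\tfrac23)\le-\tfrac32(y-y_-)^2$ for $y<y_-$, so that $\frac{d}{dt}\big((m-y_-)^{-1}\big)\ge\tfrac32$ almost everywhere, exactly as in the paper's final step.
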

\begin{proof} Since $u_0 \in H^3(\T)$, we have $u \in C^1([0,T[,H^2) \cap C([0,T[,H^3)$ and therefore Lemma  \ref{inflemma} is applicable with appropriate $\xi_1(t)$ and $\xi_2(t)$ in $\T$  ($t \in [0,T[$) to the functions
\begin{align*}
  m_1(t) := \min_{x \in \T} u_x(x,t) = u_x(\xi_1(t),t),\\
  m_2(t) := \max_{x \in \T} u_x(x,t) = u_x(\xi_2(t),t).
\end{align*}
Note that $u_x(t) \in H^2(\T) \subset C^1(\T)$ and $u_{xx}(\xi_j(t),t) = 0$ holds due to the choice of $\xi_j(t)$ ($j=1,2$) as locations of extrema. By periodicity of the $C^1$ function $u(.,t)$,  we necessarily have $m_1(t) \leq 0 \leq m_2(t)$ for every $t \in [0,T[$. (For example,  $m_1(t) > 0$ is absurd, because the function $u(.,t)$ would then have to be strictly increasing and periodic, which contradicts continuity.)

Differentiating Equation \eqref{FWEqu} with respect to $x$ leads to
$$
  u_{xt}(t) + \frac{3}{2} u_x(t)^2 + \frac{3}{2} u(t) u_{xx}(t) = 
    Q u_{xx}(t) = K \ast (u_{xx}(t))
$$
and inserting $x = \xi_j(t)$ then gives (recall that $u_{xx}(\xi_j(t),t) = 0$)
$$
  m_j'(t) + \frac{3}{2} m_j(t)^2 = \int_{\T} K(y) u_{xx}(\xi_j(t) - y)\, dy 
  \qquad \text{for almost every } t \in\, ]0,T[, j=1,2.
$$
The properties of $K$ allow for an integration by parts, hence we obtain $t$-a.e.\ 
\begin{multline*}
   m_j'(t) + \frac{3}{2} m_j(t)^2 = - \int_{\T} K'(y) u_{x}(\xi_j(t) - y)\, dy =\\
   = - \frac{1}{2(e-1)} \int_0^1 e^y u_{x}(\xi_j(t) - y)\, dy +
   \frac{e}{2(e-1)} \int_0^1 e^{-y} u_{x}(\xi_j(t) - y)\, dy \leq \\
   - \frac{m_1(t)}{2(e-1)} \int_0^1 e^y  dy +
   \frac{e\, m_2(t)}{2(e-1)} \int_0^1 e^{-y}  dy = \frac{1}{2} (m_2(t) - m_1(t)),
\end{multline*}
which in turn yields
\begin{align}
  \label{m1inequality} m_1'(t) \leq - \frac{3}{2} m_1(t)^2 + \frac{1}{2} (m_2(t) - m_1(t)),\\
  \label{m2inequality} m_2'(t) \leq - \frac{3}{2} m_2(t)^2 + \frac{1}{2} (m_2(t) - m_1(t)).
\end{align}
Thus, we are now in a situation perfectly analogous with \cite[Theorem 3.2, p.\ 237]{ConstantinEscher1998}, but for convenience of the reader we repeat the remaining steps of the conclusion. 

The sum of the inequalities in \eqref{m1inequality} and \eqref{m2inequality} gives (almost everywhere on $]0,T[$)
$$
  (m_1 + m_2)' \leq - \frac{3}{2}(m_1^2 + m_2^2) + (m_2 - m_1) =
    (m_2 - m_1)(1 + \frac{3}{2}(m_1 + m_2)) - 3 m_2^2.
$$
The function $m_1 + m_2$ is absolutely continuous on $[0,T[$, $m_2 - m_1 \geq 0$ and the hypothesis of the theorem implies at time $t=0$ the condition $1 + \frac{3}{2}\big(m_1(0) + m_2(0)\big) < 0$.  By the above inequality, the corresponding condition must hold for all time, i.e.,
$$
  \forall t \in [0,T[ \col \quad 1 + \frac{3}{2}\big(m_1(t) + m_2(t)\big) < 0,
$$
which we put to use in \eqref{m1inequality} to deduce (a.e.\ on $[0,T[$)
\begin{multline*}
   m_1' \leq - \frac{3}{2} m_1^2 - \frac{1}{2} m_1 + \frac{1}{2} m_2 <
    - \frac{3}{2} m_1^2 - \frac{1}{2} m_1 + \frac{1}{2} \left(-m_1 - \frac{2}{3}\right) = \\
    = - \frac{3}{2} \left(m_1^2 + \frac{2}{3} m_1  + \frac{2}{9}\right) =
    - \frac{3}{2} \left(\Big(m_1 + \frac{1}{3}\Big)^2  + \frac{1}{9}\right) \leq
    - \frac{3}{2} \left(m_1 + \frac{1}{3}\right)^2.
\end{multline*}
Putting $M(t) := m_1(t) + \frac{1}{3}$ we have $M(0) = m_1(0) + \frac{1}{3} < - \frac{2}{3} - m_2(0) + \frac{1}{3} = - \frac{1}{3} - m_2(0) < 0$ and 
$$
   M'(t) = m_1'(t) \leq -\frac{3}{2} M(t)^2 \quad \text{for almost every } t \in \,]0,T[,
$$
which implies that $M(t) < 0$ throughout. Finally, we obtain  a.e.\ with respect to $t$,
$$
   \diff{t} \left(\frac{1}{M(t)}\right) = - \frac{M'(t)}{M(t)^2} \geq \frac{3}{2}
$$
and therefore upon integration for every $t \in [0,T[$,
$$
   \frac{1}{M(t)} \geq \frac{1}{M(0)} + \frac{3}{2} t.
$$
Observing $M(0) < 0$, we conclude that $M(t) \to - \infty$ as $t \to 2/(3 |M(0)|)$. Thus, $T < \infty$,  $\lim_{t \uparrow T} m_1(t) = - \infty$, and from Proposition \ref{linfboundedprop} we know that the $L^\infty$-norm of $u$ stays bounded as $t$ approaches $T$, which proves wave breaking with negative slope unbounded from below.
\end{proof}

\bigskip



\bibliography{G,LaG}
\bibliographystyle{abbrv}

\end{document}